\theoremstyle{plain}
\theoremstyle{definition}
\newtheorem*{theorem*}{Theorem}
\newtheorem{thm}{Theorem}
\newtheorem{pr}[thm]{Proposition}
\newtheorem{lem}[thm]{Lemma}
\newtheorem*{theo}{Theorem}
\theoremstyle{definition}
\newtheorem{dfn}[thm]{\scshape{Definition}}
\newtheorem{rem}[thm]{\scshape{Remark}}
\newcommand\T{{\mathcal{T}}}
\newcommand\LL{{\mathcal{L}}}
\def\cleardoublepage{\clearpage\if@twoside \ifodd\c@page\else
	\hbox{}
	\thispagestyle{empty}
	\newpage
	\if@twocolumn\hbox{}\newpage\fi\fi\fi}
\DeclareMathOperator{\Aut}{Aut}
\DeclareMathOperator{\St}{st}
\DeclareMathOperator{\Ker}{Ker}
\def\LL{\mathcal{L}}
\def\M{\mathscr{M}}
\keywords{Groups of automorphisms of rooted trees, branch groups}
\subjclass[2010]{20E08}
\begin{document}
	
	\title[A family of fractal non-contracting weakly branch groups]{A family of fractal non-contracting weakly branch groups}
\author[M. Noce]{Marialaura Noce}
\address{University of Bath, Department of Mathematical Sciences, Claverton Down, Bath, BA2 7AY, U.K.}
\email{mn670@bath.ac.uk}

\thanks{The author is supported by EPSRC (grant number
1652316), and partially by the Spanish Government, grant MTM2017-86802-P, partly with FEDER funds.}

    \begin{abstract}
We construct a new family of groups that is non-contracting and weakly regular branch over the derived subgroup. This gives the first example of an infinite family of groups acting on a $d$-adic tree, with $d \geq 2$, with these properties. 
	\end{abstract}

	\maketitle
	
%\tableofcontents

\section{Introduction}

Weakly branch groups were first defined by Grigorchuk in 1997 as a generalization of the famous $p$-groups constructed by Grigorchuk himself \cites{Grig1,Grig2}, and Gupta and Sidki \cite{GS}. These groups possess remarkable and exotic properties. For instance, the Grigorchuk group is the first example of a group of intermediate word growth, and amenable but not elementary amenable. Also, together with the Grigorchuk group, other subgroups of the group of automorphisms of rooted trees like the Gupta-Sidki $p$-groups and many groups in the family of the so-called Grigorchuk-Gupta-Sidki groups have been shown to be a counterexample to the General Burnside Problem.

For these reasons, (weakly) branch groups spread great interest among group
theorists, who have actively investigated further properties of these in the recent years: just-infiniteness, fractalness, maximal subgroups, or contraction. 

Roughly speaking, a group is said to be \textit{contracting} if the sections of every element are ``shorter'' than the element itself, provided the element does not belong to a fixed finite set, called the nucleus (see the exact definition in Section 2).

Even though in the literature there are many examples of weakly branch contracting groups, not much is known about weakly branch groups that are non-contracting. In 2005 Dahmani \cite{dahmani} provided the first example of a non-contracting weakly regular branch automaton group. Another example with similar properties was constructed by Mamaghani in 2011 \cite{mohamed}. Both are examples of groups acting on the binary tree.

In this paper we construct the first example of an infinite family of non-contracting weakly branch groups acting on $d$-adic trees for any $d \geq 2$. This result gives wealth of examples of groups with these unusual properties. In the following we denote with $\Aut\T_d$ the group of automorphisms of a $d$-adic tree.

\begin{theo}
For any $d \geq 2$, there exists a group $\mathscr{M}(d) \leq \Aut\T_d$ that is weakly regular branch over its derived subgroup, non-contracting and fractal.
\end{theo}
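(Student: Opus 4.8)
The plan is to take $\mathscr{M}(d)=\langle a,b\rangle\le\Aut\T_d$ to be the self-similar group generated by the rooted automorphism $a$ cyclically permuting the $d$ maximal subtrees and the directed automorphism $b$ with the wreath recursion prescribed in Section~3, and then to verify the three assertions separately. The first thing to record is that $\mathscr{M}(d)$ is self-similar, i.e.\ every section $g|_v$ of an element $g\in\mathscr{M}(d)$ at any vertex $v$ again belongs to $\mathscr{M}(d)$; this is immediate by induction on $|v|$ from the recursive definitions of $a$ and $b$. Write $G=\mathscr{M}(d)$ and let $\psi\colon\St_G(1)\to G^{d}$ denote the embedding given by the first-level sections.

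\emph{Fractalness.} Since $a$ acts transitively on the first level, it suffices to check that for each coordinate $i$ the projection of $\psi(\St_G(1))$ onto the $i$-th factor is all of $G$, and then iterate down the tree. Now $b\in\St_G(1)$, and the first-level sections of $b$ together with those of its conjugates $b^{a^{k}}$ (conjugation by $a$ cyclically shifts the section-tuple) generate a subgroup of $G$ containing both $a$ and $b$ — here the precise recursion is used — hence equal to $G$. This is routine and parallels the GGS-group case.

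\emph{Weakly regular branch over $G'$.} Put $K=G'$. I would first exhibit one nontrivial element of the form $(w,1,\dots,1)\in\psi(\St_K(1))$ with $w\in K$, obtained as the commutator of two elements of $\St_G(1)$ whose section-tuples are chosen to coincide in every coordinate but one, so that the commutator lies in $K\cap\St_G(1)$ and its image under $\psi$ has a single nontrivial entry. Using fractalness, conjugating this element inside $G$ produces $(x,1,\dots,1)\in\psi(\St_K(1))$ for every $x$ in the $G$-normal closure of $w$; a short computation with the abelianisation $G/K$ then shows that this normal closure, together with finitely many companion commutators, generates $K$, so one obtains $K$ in the first coordinate, and conjugating by powers of $a$ spreads it over all coordinates, giving $K\times\cdots\times K\le\psi(\St_K(1))$. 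Since $G$ is non-abelian we have $K\ne 1$, and a finite nontrivial group cannot contain a copy of its $d$-th direct power, so $K$ is automatically infinite; thus $G$ is weakly regular branch over $K=G'$, hence weakly branch.

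\emph{Non-contraction} is the heart of the matter and the step I expect to cause the most trouble. Recall that a contracting group has a finite nucleus, and consequently every one of its elements has only finitely many distinct sections. So it suffices to produce a single $g\in G$ for which the set $\{\,g|_v : v\in\T_d\,\}$ is infinite, equivalently for which the word lengths $\abs{g|_v}$ with respect to $\{a,b\}$ are unbounded. The strategy is to read off from the recursion a vertex $v_0$ at which some distinguished element $g$ (a well-chosen power or commutator of $a$ and $b$, or $b$ itself) satisfies a renormalisation identity such as $g|_{v_0}=g^{2}$, or more generally $g|_{v_0}=g\cdot h$ with $h\ne 1$ and ``compounding'' along the ray $v_0v_0v_0\cdots$, so that $g|_{v_0^{n}}$ runs through infinitely many distinct elements of strictly growing length; then $G$ cannot be contracting. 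The delicate point — and the reason the specific recursion chosen for $b$, the feature that distinguishes $\mathscr{M}(d)$ from the contracting GGS-groups, is essential — is to rule out an accidental collapse: one must establish a normal form, or at least a lower length bound, that survives every level of the wreath decomposition for the chosen family of sections.
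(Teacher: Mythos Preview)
Your proposal has two substantive mismatches with the paper.

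\medskip
\textbf{The construction.} You take $\mathscr{M}(d)=\langle a,b\rangle$ with a single rooted generator and a single directed generator, in the GGS mould. The paper's group is not of this shape: it has $d$ generators $m_1,\dots,m_d$, where each $m_i$ for $i<d$ is a ``self-referential'' directed element $m_i=(1,\dots,m_i,\dots,1)(1\,\cdots\,d{+}1{-}i)$ carrying its own copy in one coordinate and a cycle of length $d{+}1{-}i$ as rooted part, while $m_d=(m_1,\dots,m_d)$. None of the generators is a pure rooted automorphism. Your fractalness and weakly-branch sketches, which rely on conjugating a single $b$ by powers of a rooted $a$, do not transfer to this setting; the paper's arguments for both properties are explicit calculations with the $d$ generators (and the weakly-branch step splits into the cases $d=2$ and $d\ge 3$).

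\medskip
\textbf{Non-contraction.} Your plan is to find $g$ with a renormalisation $g|_{v_0}=g^2$ (or $g|_{v_0}=g\cdot h$) and then argue that the sections along the ray $v_0^n$ have unbounded length; you rightly flag the ``delicate point'' that one needs a lower length bound surviving the wreath decomposition. The paper avoids this difficulty altogether with a much simpler mechanism: it exhibits an element $h=m_d^{m_1}$ that is a \emph{fixed point} of the section map, i.e.\ $h|_1=h$. Then for every $k$ one has $(h^k)|_{1^n}=h^k$, so each power $h^k$ would have to lie in the nucleus. Since $m_1$ (hence $m_d$, hence $h$) has infinite order, this forces the nucleus to be infinite, a contradiction. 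No length estimate or normal form is needed --- only the infinite order of $h$, which follows from an elementary order-drop argument on $m_1$. So the step you anticipate as the heart of the matter is, in the paper's approach, a two-line observation once the right element is spotted.
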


%We conclude this introduction by pointing out that for $d \geq 3$, the Hanoi Towers group $\mathcal{H}(d) \leq \Aut\T_d$ (which represents the famous game of Hanoi Towers on $d$ pegs) is non-contracting and only weakly branch. So far it is not known if these groups can be branch. For more information on the topic, see \cite{hanoi} and \cite{hanoirachel}.

\subsubsection*{Organization}
In Section 2 we give some definitions of groups acting on regular rooted trees and of properties like fractalness, branchness and contraction. In Section 3 we introduce these groups and we prove the main theorem together with some additional results regarding the order of elements of $\M(d)$.
%of 3.1 we prove some general results, splitting the case $\M(2)$, and $\M(d)$ for $d\geq3$. Finally, in Section 3.2 we prove the main theorem.\textcolor{magenta}{change at the end}

\vspace{0.4cm}
\subsection*{Acknowledgements}
The author wants to thank Gustavo A. Fern\'andez-Alcober and Albert Garreta for useful discussions.

\vspace{0.6cm}
\section{Preliminaries}
In this section we fix some terminology regarding groups of automorphisms of $d$-adic (rooted) trees. For further information on the topic, see \cite{branch} or \cite{nekrashevych}.

Let $d$ be a positive integer, and $\T_d$ the $d$-adic tree. We denote with $\Aut\T_d$ the group of automorphisms of $\T_d$. We let $\LL_n$ be the $n$-th level of $\T_d$, and $\LL_{\geq n}$ the levels of the tree from level $n$ and below.

The stabilizer of a vertex $u$ of the tree is denoted by $\St(u)$, and, more generally, the $n$-th \emph{level stabilizer} $\St(n)$ is the subgroup of $\Aut\T_d$ that fixes every vertex of $\LL_n$. If $G \leq \Aut\T_d$, we define the $n$-th level stabilizer of $G$ as $\St_{G}(n) = \St(n) \cap G$.
Notice that stabilizers are normal subgroups of the corresponding group. We let $\psi$ be the isomorphism
\begin{align*}
\psi: \St(1) & \longrightarrow \Aut\T_d \times \overset{d}{\cdots} \times \Aut\T_d \label{psi_iso}\\
g & \longmapsto (g_{u})_{u \in \LL_1}, \nonumber
\end{align*}
where $g_{u}$ is the section of $g$ at the vertex $u$, i.e.\ the action of $g$ on the subtree $\T_u$ that hangs from the vertex $u$. Let $S_d$ be the symmetric group on $d$ letters. An automorphism $a \in \Aut\T_d$ is called \emph{rooted} if there exists a permutation $\sigma \in S_{d}$ such that $a$ rigidly permutes the trees $\{\T_u \mid u\in \LL_1\}$  according to the permutation $\sigma$. We usually identify $a$ and $\sigma$.

Notice that if $g \in \St(1)$ with $\psi(g)=(g_1, \dots, g_d)$, and $\sigma$ is a rooted automorphism, then,
\begin{equation}\label{carmine}
\psi(g^{\sigma})= \left(g_{\sigma^{-1}(1)},  \dots, g_{\sigma^{-1}(d)}\right).
\end{equation}
Any element $g \in G$ can be written uniquely in the form $g= h \sigma$, where $h \in \St(1)$ and $\sigma$ is a rooted automorphism.

Notice also that the decomposition $g=h \sigma$, together with the action \eqref{carmine}, yields  isomorphisms
\begin{align}
\begin{split}\label{e: iterated_wreath}
\Aut\T_d &\cong \St(1) \rtimes S_d \cong \left(\Aut\T_d \times \overset{d}{\dots} \times \Aut\T_d\right) \rtimes S_d \\ 
&\cong \Aut\T_d \wr S_d \cong (( \dots \wr S_d) \wr S_d) \wr S_d. 
\end{split}
\end{align}

Throughout the paper, we will use the following shorthand notation: let $f \in \Aut\T$ of the form $f=gh$, where $g\ \in \St_G(1)$ and $h$ is the rooted automorphism corresponding to the permutation $\sigma \in S_d$. If $\psi(g) = (g_1, \dots, g_d)$,  we write $f = (g_1, \dots, g_d)\sigma$.

\begin{dfn}
Let $G\leq \Aut\T_d$, and let $V(\T_d)$ be the set of vertices of $\T_d$. Then:
\begin{itemize}

\item The group $G$ is said to be \emph{self-similar} if for any $g \in G$ we have
$$
\{g_u \mid g \in G, u \in V(\T_d)\} \subseteq G.
$$
In other words, the sections of $g$ at any vertex are still elements of $G$. For example, $\Aut\T_d$ is self-similar. 
\item A self-similar group $G$ is said to be \emph{fractal} if $\psi_u(\St_G(u))=G$ for all $u \in V(\T_d)$, where $\psi_u$ is the homomorphism sending $g \in \St(u)$ to its section $g_u$. 
\end{itemize}
\end{dfn}

To prove that a group is self-similar it suffices to show that the condition above is satisfied by the vertices of the first level of the tree (see \cite[Proposition 3.1]{hanoi}). The situation is similar in the case of fractal groups. More precisely, using Lemma \ref{jongrig}, we deduce that to show that a group $G$ is fractal, it is enough to check the vertices in the first level of $\T_d$.

We recall that $G$ is said to be \emph{level transitive} if it acts transitively on every level of the tree.
\begin{lem}\cite[Lemma 2.7]{Jone}\label{jongrig}
If $G \leq \Aut\T_d$ is transitive on the first level and $\psi_x(\St_G(x)) = G$ for some $x \in \LL_1$, then $G$ is fractal and level transitive.
\end{lem}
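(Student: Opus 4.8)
The plan is to rely on the two reductions recalled just above the statement: to verify that $G$ is self-similar and that $G$ is fractal it is enough to look at the vertices of the first level (\cite[Proposition 3.1]{hanoi}, together with the paragraph preceding the lemma). Since fractality is only defined for self-similar groups, I take $G$ to be self-similar as part of the setup, so that sections of elements of $G$ are again in $G$; this fact is needed only in the very last line. Thus the task reduces to proving two things: (i) $G$ acts transitively on $\LL_n$ for every $n$, and (ii) $\psi_y(\St_G(y))=G$ for every $y\in\LL_1$. The computations use only the chain rule $(\alpha\beta)_u=\alpha_{\beta(u)}\,\beta_u$ for sections (with automorphisms acting on the left) and its immediate corollary $(g^{-1})_{g(u)}=(g_u)^{-1}$, which follows from $1=(g^{-1}g)_u=(g^{-1})_{g(u)}\,g_u$.

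For (i) I would argue by induction on $n$, the case $n=1$ being the hypothesis. Assume $G$ is transitive on $\LL_n$ and take $v,w\in\LL_{n+1}$ with first-level ancestors $v',w'\in\LL_1$. Using transitivity on $\LL_1$, choose $g_1,g_2\in G$ with $g_1(v')=x=g_2(w')$; then $g_1(v)$ and $g_2(w)$ both lie in the subtree $\T_x$, at level $n$ of $\T_x\cong\T_d$ under the identification provided by $\psi_x$. Because $\psi_x(\St_G(x))=G$ and, by the inductive hypothesis, $G$ is transitive on $\LL_n$, the group $\St_G(x)$ is transitive on the level-$n$ vertices of $\T_x$, so there is $h\in\St_G(x)$ with $h(g_1(v))=g_2(w)$. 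Then $g_2^{-1}hg_1\in G$ sends $v$ to $w$, which proves transitivity on $\LL_{n+1}$; hence $G$ is level transitive.

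For (ii), fix $y\in\LL_1$ and use transitivity on $\LL_1$ to pick $g\in G$ with $g(x)=y$. Since $g\in G$, conjugation by $g$ carries $\St_G(x)$ isomorphically onto $\St_G(y)$. For $h\in\St_G(x)$ the chain rule, combined with $h(x)=x$, $g^{-1}(y)=x$ and $(g^{-1})_y=(g_x)^{-1}$, gives $(ghg^{-1})_y=g_x\,h_x\,g_x^{-1}$. Letting $h$ range over $\St_G(x)$, this yields $\psi_y(\St_G(y))=g_x\,\psi_x(\St_G(x))\,g_x^{-1}=g_x\,G\,g_x^{-1}$, and this equals $G$ because $g_x\in G$ by self-similarity. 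By the first-level reduction, $G$ is fractal.

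I expect no genuine obstacle here: the lemma is a soft consequence of the definitions once the first-level reductions are in hand. The only points requiring care are bookkeeping ones — applying the section chain rule correctly in step (ii), in particular the identity $(g^{-1})_y=(g_x)^{-1}$, and being explicit that conjugation by an element of $G$ taking $x$ to $y$ induces an isomorphism $\St_G(x)\to\St_G(y)$ that intertwines the maps $\psi_x$ and $\psi_y$ up to the inner automorphism by $g_x$.
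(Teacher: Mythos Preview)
The paper does not give its own proof of this lemma; it is quoted from \cite{Jone} and used as a black box, so there is nothing to compare against directly. Your argument is the standard one and is correct, with one caveat worth flagging.

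You close part (ii) by invoking ``the first-level reduction'' for fractality, but in this paper that reduction is presented as a \emph{consequence} of Lemma~\ref{jongrig} (see the sentence immediately preceding the lemma), so appealing to it here is circular as written. The repair is immediate and does not change your strategy: once you have $\psi_y(\St_G(y))=G$ for every $y\in\LL_1$, run an induction on the level of $u$. Writing $u=yv$ with $y\in\LL_1$, the surjectivity $\psi_y(\St_G(y))=G$ gives $\psi_y(\St_G(u))=\St_G(v)$, and hence $\psi_u(\St_G(u))=\psi_v(\St_G(v))=G$ by the inductive hypothesis applied to the shorter vertex $v$. This replaces the cited reduction with a two-line direct argument. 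Your explicit assumption of self-similarity is appropriate: in the paper fractality is only defined for self-similar groups, and your final step $g_x\,G\,g_x^{-1}=G$ genuinely needs $g_x\in G$.
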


Here we present a family of non-contracting weakly branch groups. To this end, in the following, we recall the corresponding two definitions.

\begin{dfn}
A self-similar group $G \leq \Aut\T_d$  is \textit{contracting} if there exists a finite subset $\mathcal{F} \subseteq G$ such that for every $g \in G$ there is $n$ such that $g_v$ belongs to $\mathcal{F}$ for all vertices $v$ of $\LL_{\geq n}$. The smallest set among all these finite sets is called the \emph{nucleus} of $G$ and it is denoted by $\mathcal{N}$.
\end{dfn}

\begin{dfn}
Let $G$ be a self-similar subgroup of $\Aut\T_d$.
We say that $G$ is \textit{weakly regular branch} over a subgroup $K \leq G$ if $G$ is level transitive and we have
$$
\psi(K \cap \St_G(1)) \geq K \times \dots \times K.
$$
If, additionally, $K$ is of finite index in $G$, then $G$ is said to be \textit{regular branch} over $K$.
\end{dfn}

\vspace{0.6cm}
\section{The groups $\M(d)$}

Let $d \geq 2$, and let $\T_d$ be the $d$-adic tree. The group $\M(d) \leq \Aut\T_d$ is generated by $d$ elements $m_1, \dots, m_d$, where $m_1, \dots, m_d$ are defined recursively as follows:
\begin{align*}
m_1 & = (1, \dots, 1, m_1)(1 \ \dots \ d) \\
m_2 & = (1, \dots, 1, m_2, 1)(1 \ \dots \ d-1) \\
m_3 & = (1, \dots, m_3, 1, 1)(1 \ \dots \ d-2) \\
\vdots \\
m_{d-1} & = (1,m_{d-1}, 1, \dots, 1)(1 \ 2) \\
m_{d} & = (m_1, \dots, m_d).
\end{align*}

For example, for $d=3$, we have $\M(3)=\langle m_1, m_2, m_3 \rangle$, where 
$$
m_1 = (1,1, m_1)(1 \ 2 \ 3), \quad m_2 = (1, m_2, 1)(1 \ 2), \quad m_3=(m_1,m_2,m_3).
$$

\begin{figure}[H]
 \centering
\includegraphics[scale=0.7]{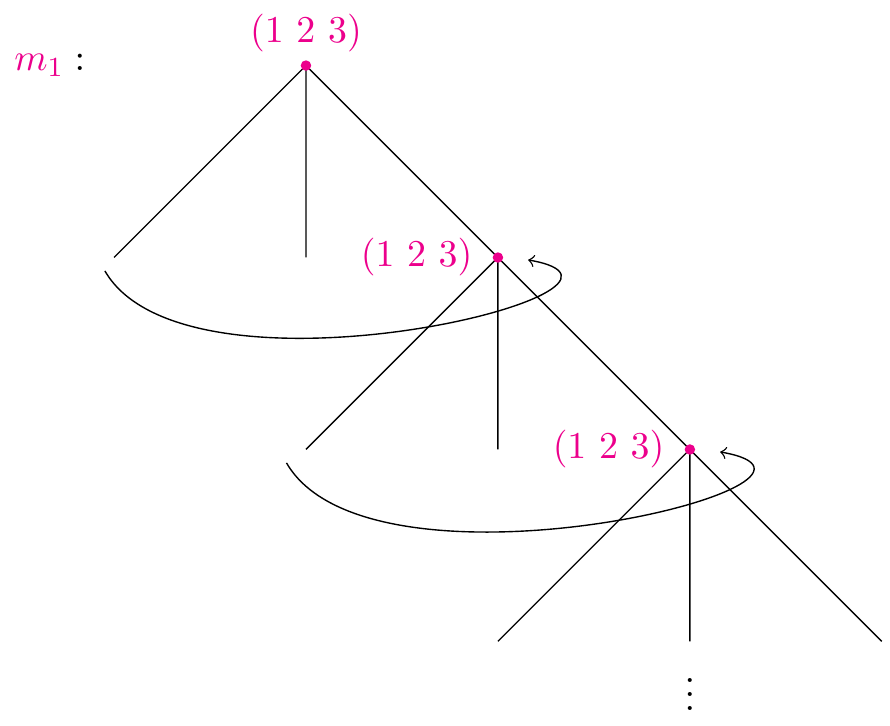}
\includegraphics[scale=0.7]{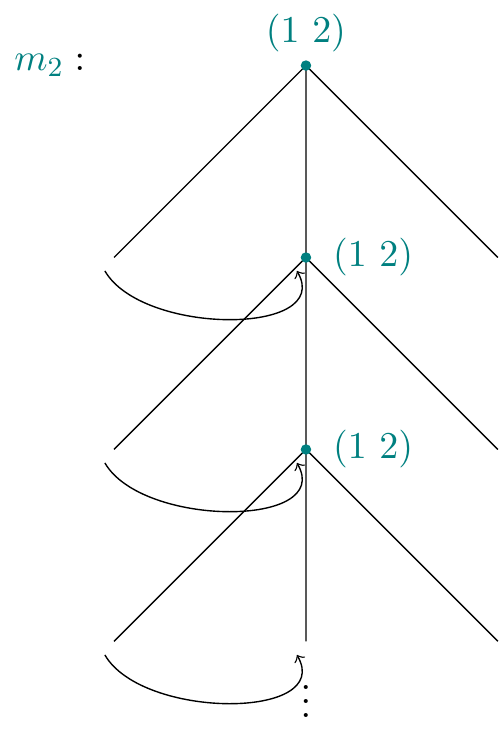}
\includegraphics[scale=0.6]{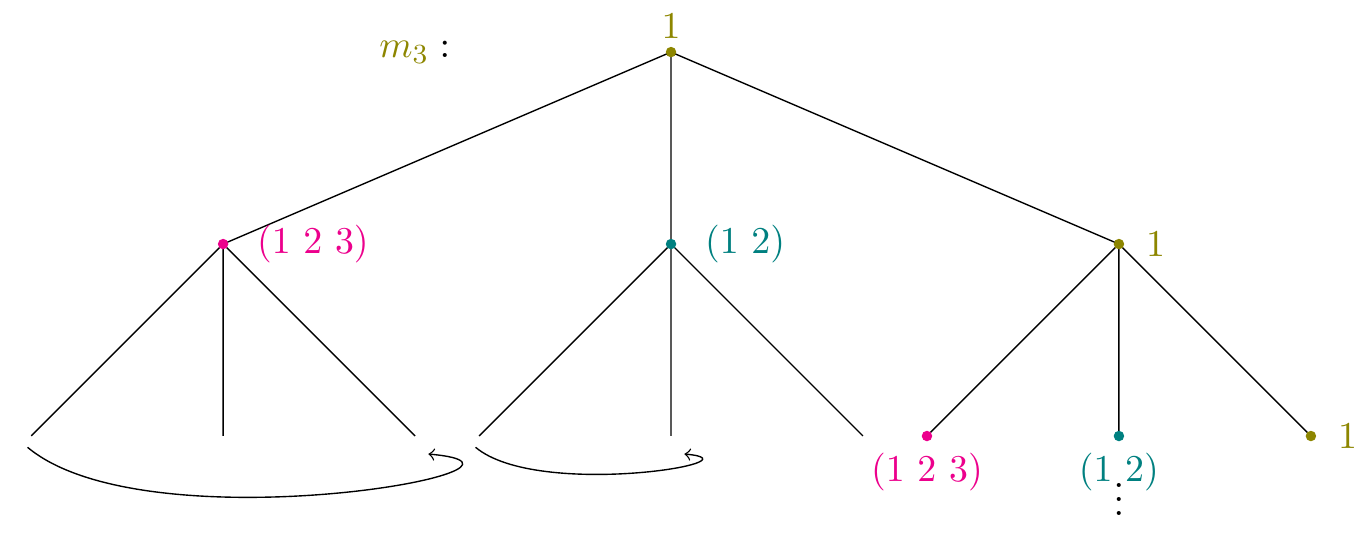}
    \caption{The generators of $\M(3)$.}\nonumber
    \label{m3}
\end{figure}

\subsection{Proof of the main theorem}

In this section we prove the main result of the paper. 

%The following three propositions are the proof of the main Theorem. 
In order to ease notation, and unless it is strictly necessary, we will simply write $\M$ to denote an arbitrary group $\M(d)$. 

\begin{pr}
The group $\M$ is fractal and level transitive.
\end{pr}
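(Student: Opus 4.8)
The plan is to apply Lemma~\ref{jongrig}. Fix $x\in\LL_1$, say the first vertex, and let $\psi_x$ be the corresponding section map. First, $\M$ is transitive on $\LL_1$: already the rooted part of $m_1$ is the $d$-cycle $(1\ 2\ \cdots\ d)$ (in fact, using $m_{d-1}$ as well, $\M$ induces all of $S_d$ on $\LL_1$). Next, every first-level section of a generator is trivial or some $m_j$, so $\M$ is self-similar by \cite[Proposition~3.1]{hanoi}; in particular $\psi_x(\St_\M(x))\leq\M$. Hence, by Lemma~\ref{jongrig}, it suffices to prove that $\psi_x(\St_\M(x))\supseteq\{m_1,\dots,m_d\}$, and for this it is enough to produce $m_1,\dots,m_d$ as sections at $x$ of elements of the smaller group $\St_\M(1)\leq\St_\M(x)$ (on which $\psi_x$ agrees with the first coordinate of $\psi$).

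To produce the generators, I would proceed as follows. (a) Since $m_d=(m_1,\dots,m_d)$ has trivial rooted part, $m_d\in\St_\M(1)$ and $\psi_x(m_d)=m_1$. (b) For $2\leq i\leq d-1$ the rooted part of $m_i$ is a $(d-i+1)$-cycle, so $m_i^{d-i+1}\in\St_\M(1)$; a direct computation in the wreath product shows $m_i^{d-i+1}=(m_i,\dots,m_i,1,\dots,1)$, with $m_i$ in the first $d-i+1$ coordinates, so $\psi_x(m_i^{d-i+1})=m_i$. (c) Level stabilizers are normal, so $\St_\M(1)\trianglelefteq\M$ and every conjugate $m_d^{m_1^{k}}$ lies in $\St_\M(1)$; since the rooted part of $m_1$ shifts the $d$ coordinates cyclically while its diagonal part only conjugates one coordinate by $m_1$ at each step, one checks that for a suitable $k$ the section of $m_d^{m_1^{k}}$ at $x$ equals $m_1^{-1}m_dm_1$. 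Now $\psi_x(\St_\M(1))$ is a subgroup of $\M$ containing $m_1$ and $m_1^{-1}m_dm_1$ (by (a) and (c)), hence containing $m_d$, and it contains $m_2,\dots,m_{d-1}$ by (b); so it equals $\M$. A fortiori $\psi_x(\St_\M(x))=\M$, and Lemma~\ref{jongrig} yields that $\M$ is fractal and level transitive.

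The transitivity and self-similarity claims and the invocation of Lemma~\ref{jongrig} are routine; the one place needing care is the wreath-product bookkeeping in (b) and (c) — determining the coordinates of $m_i^{d-i+1}$ and the section at $x$ of $m_d^{m_1^{k}}$ — where one must track carefully how the cyclic shift coming from the rooted part of $m_1$ interacts with the coordinatewise conjugations, consistently with the convention \eqref{carmine}.
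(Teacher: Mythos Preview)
Your argument is correct and follows the same template as the paper's proof: verify first-level transitivity via the rooted part of $m_1$, observe self-similarity from the form of the generators, exhibit explicit elements of $\St_\M(1)$ whose sections at a fixed first-level vertex recover all the generators, and invoke Lemma~\ref{jongrig}. The only cosmetic difference is the choice of witnesses: the paper works at the \emph{last} vertex and reads off $m_1,\dots,m_d$ directly from the last coordinates of $m_1^d,\,m_d,\,m_d^{m_1},\dots,m_d^{m_1^{d-2}}$, whereas you work at the \emph{first} vertex using $m_d$, the powers $m_i^{d-i+1}$, and the conjugate $m_d^{m_1}$ (your ``suitable $k$'' is simply $k=1$, since $\psi(m_d^{m_1})=(m_d^{m_1},m_1,\dots,m_{d-1})$), which forces the small extra step of recovering $m_d$ from $m_1$ and $m_d^{m_1}$.
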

\begin{proof}
Notice that the group is transitive on the first level because the rooted part of the generator $m_1$ is $(1 \ 2 \ \dots \ d)$. Also, it is straightforward to see that the group is self-similar, since the sections of every generator at the first level are generators of $\M$. To see that $\M$ is fractal, note that
\begin{align*}
m_1^d & = (m_1, \dots, m_1) \\
m_d^{m_1^{d-2}} & = (m_{3}^{m_1}, \dots, m_2) \\
\vdots \\
%m_d^{m_1^3} & = (m_{d-2}^{m_1}, \dots, m_{d-3}) \\
m_d^{m_1^2} & = (m_{d-1}^{m_1}, \dots, m_{d-2}) \\
m_d^{m_1} & = (m_d^{m_1}, \dots, m_{d-1}) \\
m_d & =(m_1, \dots, m_d).
\end{align*}
Then in the last component of the elements above we obtain all the generators of $\M$. Using Lemma \ref{jongrig}, we conclude that $\M$ is level transitive and fractal.
\end{proof}

\begin{pr}\label{mweaklybranch}
The group $\M$ is weakly regular branch over its derived subgroup $\M'$. 
\end{pr}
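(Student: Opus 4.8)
The plan is to show that $\psi(\M' \cap \St_{\M}(1)) \geq \M' \times \cdots \times \M'$. Since level transitivity is already established in the previous proposition, it suffices to produce, for a fixed coordinate (say the last one), elements of $\M' \cap \St_{\M}(1)$ whose $\psi$-image is $(1,\dots,1,w)$ for every $w$ in a generating set of $\M'$, and then use the transitivity of $\M$ on $\LL_1$ to conjugate these into the other coordinates. Because $\M'$ is generated as a normal subgroup by the commutators $[m_i,m_j]$, a convenient generating set for $\M'$ (as a subgroup) is $\{[m_i,m_j]^g : g \in \M\}$, and in fact it is enough to hit the finitely many $[m_i,m_j]$ together with their conjugates, the latter being taken care of automatically once we know $\psi(\M'\cap\St_\M(1))$ contains a full direct product of subgroups that is $\M$-invariant under the coordinate-permuting action \eqref{carmine}.

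The key computational step is this: using the recursive definitions, the section map sends suitable commutators of the generators into elements supported in a single first-level coordinate. For instance $m_1^d = (m_1,\dots,m_1)$ lies in $\St_\M(1)$, and conjugating a generator $m_j$ (for $2 \le j \le d$, which already lies in $\St_\M(1)$) by powers of $m_1$ shifts its nontrivial section around the coordinates; forming commutators such as $[m_j, m_1^d]$ or $[m_j^{m_1^k}, m_\ell]$ and reading off $\psi$ yields elements of the form $(1,\dots,1,[m_a,m_b],1,\dots,1)$ or more generally $(*, \dots, *)$ with a controlled number of nontrivial entries. The goal is to combine these so that, after multiplying several such elements, all coordinates but one cancel and the surviving coordinate is an arbitrary prescribed commutator $[m_a,m_b]$. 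Since $m_d = (m_1,\dots,m_d)$ already realizes every generator in some coordinate, commutators of $m_d$ with the other (shifted) generators are the natural building blocks: e.g.\ $[m_d, x]$ for $x \in \St_\M(1)$ has $\psi$-image $([m_1,x_1],\dots,[m_d,x_d])$, and by choosing $x$ to be a product of conjugates of $m_2,\dots,m_{d-1}$ (which are trivial in all but one section) one can isolate $[m_j, m_k]$ in the last coordinate while keeping the element in $\St_\M(1)$ and in $\M'$.

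The main obstacle I anticipate is bookkeeping the interaction between the rooted permutations attached to $m_1,\dots,m_{d-1}$ and the support of the sections: conjugation by $m_1$ cyclically permutes coordinates, but $m_2,\dots,m_{d-1}$ carry their own (shorter) cycles, so forming the right commutator to land a single nontrivial section in a chosen coordinate without introducing garbage in the others requires care. The cleanest route is probably to first prove the statement for the last coordinate by exhibiting explicitly, for each pair $(i,j)$, an element $c_{ij} \in \M' \cap \St_\M(1)$ with $\psi(c_{ij}) = (1,\dots,1,[m_i,m_j])$ — using $m_1^d=(m_1,\dots,m_1)$ to supply copies of $m_1$ in every slot and $m_d$ to supply the remaining generators — and then invoke level transitivity (Lemma \ref{jongrig}, already applied above) together with \eqref{carmine} to transport these to all other coordinates, concluding $\psi(\M'\cap\St_\M(1)) \geq \M'\times\cdots\times\M'$.
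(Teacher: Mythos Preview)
Your overall strategy---produce for each pair $(i,j)$ an element of $\M'\cap\St_\M(1)$ whose $\psi$-image is $(1,\dots,1,[m_i,m_j])$, then transport to all coordinates via level transitivity---is exactly the one the paper uses. However, your sketch contains a concrete error that blocks the computations you describe: you assert that $m_j$ for $2\le j\le d$ ``already lies in $\St_\M(1)$'', but for $d\ge 3$ this is false except when $j=d$. Each $m_j$ with $1\le j\le d-1$ carries the nontrivial rooted part $(1\ \cdots\ d{+}1{-}j)$, so expressions like $[m_j,m_1^d]$ or $[m_j^{m_1^k},m_\ell]$ do not sit in $\St_\M(1)$ and your formula $\psi([m_d,x])=([m_1,x_1],\dots,[m_d,x_d])$ is not available for such $x$.

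The missing idea, which the paper supplies, is to first pass to the power $m_i^{\,d+1-i}\in\St_\M(1)$ (this kills the rooted cycle and yields $m_i^{\,d+1-i}=(m_i,\dots,m_i,1,\dots,1)$). With this in hand one gets the clean identity
\[
[m_i^{\,d+1-i},m_j]^{\,m_1^{d-1}}=(1,\dots,1,[m_i,m_j]),\qquad 1\le i\le d-2,\ i<j\le d-1,
\]
which handles all commutators not involving $m_d$. For $[m_i,m_d]$ one computes $[(m_i^{\,d+1-i})^{m_1^{i-1}},m_d]$, which lands $[m_i,m_{i+1}],\dots,[m_i,m_d]$ in consecutive coordinates, and then cancels the unwanted entries using the identity above shifted by suitable powers of $m_1$. (The paper also treats $d=2$ separately via $[m_1^2,m_2]=(1,[m_1,m_2])$.) Your plan is on the right track, but without replacing $m_j$ by $m_j^{\,d+1-j}$ you cannot carry it out.
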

\begin{proof}
We will distinguish the case $d=2$, and $d \geq 3$ separately.
Let $d=2$. The element $[m_1,m_2]$ is non-trivial since
$$
[m_1,m_2]=(m_1^{-1}m_2^{-1}m_1^2,m_1^{-1}m_2),
$$
and $m_1^{-1}m_2 \notin \St_{\M}(1)$. Then $\M(2)'$ is non-trivial, and we have 
\begin{equation}\label{m1m2}
[m_1^2,m_2]=(1, [m_1,m_2]).
\end{equation}
From Equation \eqref{m1m2} and since $\M(2)'=\langle[m_1,m_2]\rangle^{\M(2)}$, we obtain that $\{1\}\times\M(2)'\leq \psi(\M(2)')$. As $\M(2)$ is level transitive, we conclude that $\M(2)' \times\M(2)'\leq \psi(\M(2)')$, as desired.

Let $d \geq 3$, and write $\M$ for $\M(d)$. First we show that $\M'$ is non-trivial. % To prove this, it suffices to expand the commutator $[m_1, m_2]$.
Let us denote $\sigma=(1 \ 2 \ \dots \ d)$ and $\tau=(1 \ 2 \ \dots \ d-1)$. We have
\begin{align*}
[m_1,m_2]&=\sigma^{-1}(1, \dots, 1, m_1^{-1})\tau^{-1}(1, \dots, 1, m_2^{-1}, m_1)\sigma(1, \dots, 1, m_2, 1)\tau\\ 
& =(1, \dots, 1, m_1^{-1})^{\sigma}(1, \dots, 1, m_2^{-1}, m_1)^{\tau\sigma}(1, \dots, 1, m_2, 1)^{\tau^{\sigma}}[\sigma,\tau]\\ 
&=(m_1^{-1}, 1, \dots, 1)(m_1, m_2^{-1}, 1, \dots, 1)(1, \dots, 1, m_2)(1 \ 2 \ d).
\end{align*}
Hence, we obtain that 
\begin{equation}\label{commm1m2}
[m_1,m_2]=(1, m_2^{-1}, 1, \dots, 1, m_2)(1 \ 2 \ d).
\end{equation}
By Equation \eqref{commm1m2}, we have $[m_1, m_2] \notin \St_{\M}(1)$, thus $\M'$ is non-trivial.

Now, for $i=1,\dots, d-2$, and $j=i+1,\dots, d-1$, we have
\begin{equation}\label{commmimj}
[m_i^{d+1-i}, m_j]^{m_1^{d-1}} =(1, \dots, 1, [m_i, m_j]).
\end{equation}
Then in order to prove that $\{1\} \times\cdots\times \{1\} \times  \M'\leq \psi(\M' \cap \St_{\M}(1))$, it only remains to show that for any $i=1, \dots, d-1$, there exists $x(i) \in \M'\cap \St_{\M}(1)$ such that
$$
x(i)=(1, \dots, 1, [m_i,m_d]).
$$
To find such $x(i)$, we first observe that
$$
[(m_i^{d+1-i})^{m_1^{i-1}}, m_d]=(1, \overset{i}{\dots}, 1, [m_{i}, m_{i+1}], \dots, [m_i, m_{d-1}], [m_i, m_d]).
$$
In order to cancel all these commutators above except for the last component, we use Equation \eqref{commmimj}, and we observe that since $\M$ is level transitive, if we conjugate with a suitable power of $m_1$, we get $[m_i, m_{i+1}]^{-1}, \dots, [m_i, m_{d-1}]^{-1}$ in each component. 
For example, if $i=2$, we have 
$$
[(m_2^{d-1})^{m_1}, m_d]=(1, 1, [m_2, m_3], [m_2, m_4], \dots, [m_2, m_d]).
$$
By using the considerations above, we obtain that $x(2)$ must be of the form
\begin{align*}
x(2)&=[m_3, m_2^{d-1}]^{m_1^2}[m_4, m_2^{d-1}]^{m_1^{3}}\dots[m_{d-1}, m_2^{d-1}]^{m_1^{d-2}}[(m_2^{d-1})^{m_1}, m_d]\\
&=(1, \dots, 1, [m_2, m_d]). 
\end{align*}
This concludes the proof.
\end{proof}

To prove last part of the main theorem (that $\M(d)$ is non-contracting), we need some preliminary tools. Namely, we show some results regarding the order of elements of $\M(d)$. We will handle the case $d=2$, and $d>2$ separately. More precisely, we first prove that $\M(2)$ is torsion-free, and then, for $d >2$, we show that the groups $\M(d)$ are neither torsion-free nor torsion, contrary to the case $d=2$.

The following Remark \ref{prodcomm} and Lemma \ref{zz} are key steps to prove that $\M(2)$ is torsion-free. We write $\M$ for $\M(2)$.

\begin{rem}\label{prodcomm}
Let $h\in \M'$ with $h=(h_1,h_2)$. Then $h_1h_2 \in \M'$.
\end{rem}
\begin{proof}
Consider the following map $\rho$:
\begin{alignat*}{3}
\rho: \St_{\M}(1) &\to& \M \ &\to& \ \M/\M'\\
(h_1, h_2) &\mapsto& \ h_1h_2 &\mapsto& \overline{h_1h_2}.
\end{alignat*}
Note that $\rho$ is a homomorphism of groups since $\M/\M'$ is abelian. As $\St_{\M}(1)/\Ker\rho$ is abelian, $\M' \leq \Ker\rho$. This concludes the proof.
\end{proof}

%%%%%%
%We now show that $\Ker\rho=\M'$. First note that $[m_1,m_2] \in \Ker\rho$ since 
%$$
%[m_1,m_2]=({m_1}^{-1}{m_2}^{-1}{m_1}^{2}, {m_1}^{-1}{m_2}).
%$$
%It is easy to check that $m_1, m_2$ and their inverses normalize $\Ker\rho$. Thus, since $\M'=\langle [m_1,m_2] \rangle^{\M}$ and $[m_1,m_2] \in \Ker\rho$, we conclude that $\Ker\rho=\M'$, as desired.
%This is straightforward since $\M'=\langle [m_1, m_2]\rangle^{\M}$, and 
%$$
%[m_1,m_2]=({m_1}^{-1}{m_2}^{-1}{m_1}^{2}, {m_1}^{-1}{m_2}).
%$$

In the proof of next lemma, for a prime $p$ we denote with $\nu_p(m)$ the \textit{p-adic valuation} of $m$, that is the highest power of $p$ that divides $m$.

\begin{lem}\label{zz}
We have $\M'= (\M' \times \M')\langle [m_1,m_2] \rangle$. Furthermore
$$
\M/\M' \cong \langle m_1\M \rangle \times \langle m_2\M\rangle \cong \mathbb{Z} \times \mathbb{Z}.
$$
\end{lem}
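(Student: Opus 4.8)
Since $\M/\St_{\M}(1)$ embeds in the abelian group $S_2$, we have $\M'\le\St_{\M}(1)$, so $\psi$ is defined on $\M'$ and, by self-similarity, $\psi(\M')\le\M\times\M$. Put $T:=\psi^{-1}(\M'\times\M')\le\St_{\M}(1)$. Using the formulas $\psi(g^{h})=\psi(g)^{\psi(h)}$ for $h\in\St_{\M}(1)$ and $\psi(g^{m_1})=(g_2^{m_1},g_1)$ for $g=(g_1,g_2)\in\St_{\M}(1)$, together with $\M'\trianglelefteq\M$, one checks $T\trianglelefteq\M$. Under the usual identification via $\psi$, the inclusion $(\M'\times\M')\langle[m_1,m_2]\rangle\le\M'$ is immediate from $[m_1,m_2]\in\M'$ and from $\M'\times\M'\le\psi(\M')$, which is exactly what was proved for $d=2$ in Proposition \ref{mweaklybranch}; equivalently $T\langle[m_1,m_2]\rangle\le\M'$. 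For the reverse inclusion, recall $\M'=\langle[m_1,m_2]\rangle^{\M}$, so it suffices to show every conjugate $[m_1,m_2]^{g}$ lies in $T\langle[m_1,m_2]\rangle$. Write $\psi([m_1,m_2])=(u,v)$ with $u=m_1^{-1}m_2^{-1}m_1^{2}$, $v=m_1^{-1}m_2$. Conjugating $[m_1,m_2]$ by $h\in\St_{\M}(1)$ acts coordinatewise by conjugation, hence fixes $(u,v)$ modulo $\M'\times\M'$; conjugating by $m_1$ gives $(v^{m_1},u)$, which modulo $\M'\times\M'$ is $(v,u)$. The crucial point is that in $A:=\M/\M'$ one has $\bar u=\overline{m_1m_2^{-1}}=-\bar v$, so $(v,u)\equiv(u,v)^{-1}$ modulo $\M'\times\M'$, i.e.\ $[m_1,m_2]^{m_1}\equiv[m_1,m_2]^{-1}\pmod T$. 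Since every $g\in\M$ is of the form $h$ or $m_1h$ with $h\in\St_{\M}(1)$, it follows that every conjugate of $[m_1,m_2]$ is $\equiv[m_1,m_2]^{\pm1}\pmod T$, whence $\M'=T\langle[m_1,m_2]\rangle$.

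\textbf{Part 2, reduction.}
Write $A=\M/\M'$, $x=\overline{m_1}$, $y=\overline{m_2}$; $A$ is abelian and generated by $x,y$. I claim the whole statement $A\cong\mathbb{Z}\times\mathbb{Z}$ reduces to showing that $x$ has infinite order. Indeed, suppose $m_1^{a}m_2^{b}\in\M'$. Comparing root permutations gives $a=2a'$, and then $\psi(m_1^{a}m_2^{b})=(m_1^{a'+b},\,m_1^{a'}m_2^{b})$, so Remark \ref{prodcomm} yields $m_1^{a'+b}\cdot m_1^{a'}m_2^{b}=m_1^{2a'+b}m_2^{b}\in\M'$; multiplying by $(m_1^{2a'}m_2^{b})^{-1}\in\M'$ gives $m_1^{b}\in\M'$, hence $b=0$ if $x$ has infinite order, and then $a=0$. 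Therefore the surjection $\mathbb{Z}^{2}\to A$, $(a,b)\mapsto x^{a}y^{b}$, is injective, i.e.\ $A\cong\langle m_1\M'\rangle\times\langle m_2\M'\rangle\cong\mathbb{Z}\times\mathbb{Z}$.

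\textbf{Part 2, the crux: $x$ has infinite order.}
Suppose not and take $N>0$ minimal with $m_1^{N}\in\M'$. As above $N$ is even and $\psi(m_1^{N})=(m_1^{N/2},m_1^{N/2})$. By Part 1 there is $k\in\mathbb{Z}$ with $m_1^{N/2}u^{-k}\in\M'$ and $m_1^{N/2}v^{-k}\in\M'$; if $k=0$ this already gives $m_1^{N/2}\in\M'$, contradicting minimality, so $k\neq0$. Comparing root permutations forces $N/2\equiv k\pmod2$, and a short computation of the two sections of $m_1^{N/2}u^{-k}$ (using $u^{-1}=(1,m_2^{m_1})\sigma$ and $u^{-2}=(m_2^{m_1},m_2^{m_1})$, where $\sigma=(1\ 2)$) combined with Remark \ref{prodcomm} yields $m_1^{N/2}m_2^{k}\in\M'$; applying the reduction above to this element shows $N/2$ is even, i.e.\ $4\mid N$. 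One then iterates: since $N/2$ and $k$ are even, $m_1^{N/2}u^{-k}$ is a ``diagonal'' element $\psi^{-1}(g_1,g_1)$ with $g_1=m_1^{N/4}(m_2^{k/2})^{m_1}$, and re-applying Part 1 and Remark \ref{prodcomm} to it (peeling off one more level of $\psi$) forces $2^{s}\mid N$ for every $s\ge1$, hence $N=0$, a contradiction. This establishes that $x$ has infinite order and, by the reduction, finishes the proof.

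\textbf{Main obstacle.} Part 1 and the reduction in Part 2 are routine once the section identities are in place. The genuine difficulty is the last descent: one must keep the elements $m_1^{N/2^{j}}u^{-k_j}$ in a form to which Part 1 and Remark \ref{prodcomm} can be reapplied — this is exactly why one works with $u^{-2}$ and $v^{-2}$, whose $\psi$-images are diagonal — and carefully bookkeep the $2$-adic valuations $\nu_2$ so that each step strictly increases $\nu_2(N)$; managing the parity cases and the ``diagonal'' structure at every level is where essentially all the work lies.
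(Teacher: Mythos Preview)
Your Part 1 and the reduction in Part 2 are correct and essentially match the paper's argument (the paper also checks that conjugates of $[m_1,m_2]$ by the generators lie in $(\M'\times\M')\langle[m_1,m_2]\rangle$, and also uses Remark \ref{prodcomm} together with the section formula $\psi(m_1^{a}m_2^{b})=(m_1^{a/2+b},m_1^{a/2}m_2^{b})$). Your reduction to ``$\overline{m_1}$ has infinite order in $\M/\M'$'' is a clean way to organise things.

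The gap is in the crux. Your first descent step is fine: from $m_1^{N/2}u^{-k}\in\M'$ one gets $m_1^{N/2-k}m_2^{k}\in\M'$, the reduction gives $N\mid k$, and the root permutation forces $N/2\equiv k\pmod 2$, hence $4\mid N$. But the iteration you describe via ``diagonal'' elements breaks at the next step. With $N/2$ and $k$ even you correctly have $m_1^{N/2}u^{-k}=(g_1,g_1)$ with $g_1=m_1^{N/4}(m_2^{k/2})^{m_1}$; however $(m_2^{k/2})^{m_1}=\bigl((m_2^{k/2})^{m_1},\,m_1^{k/2}\bigr)$ is \emph{not} diagonal, so $g_1u^{-k'}$ is not of the form $(h,h)$ and your ``peel off one more level'' scheme does not reproduce an element of the shape $m_1^{N/2^{j}}u^{-k_j}$. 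Thus the mechanism you name in the ``Main obstacle'' paragraph --- persistence of diagonality --- is precisely what fails.

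The correct descent, which is what the paper does, bypasses diagonality and tracks $2$-adic valuations directly. Given $m_1^{a}m_2^{b}\in\M'$ with $N\mid b$, Part~1 applied to the second section yields $m_1^{a/2+k}m_2^{b-k}\in\M'$ with $N\mid k$; hence the new $m_1$-exponent satisfies $\nu_2(a/2+k)=\nu_2(a)-1$ whenever $\nu_2(a)\le\nu_2(N)$. Starting from $a_0=N$ this produces $\nu_2(a_j)=\nu_2(N)-j$, and after $\nu_2(N)$ steps one reaches an odd $m_1$-exponent, contradicting $\M'\le\St_\M(1)$. The paper packages the same idea as ``choose $m_1^{i}m_2^{j}\in\M'$ with $\nu_2(i)$ minimal positive and derive an element with strictly smaller $\nu_2$''. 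Replacing your diagonal iteration by this valuation argument closes the gap.
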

\begin{proof}
Since $\M$ is weakly regular branch over $\M'$ by Proposition \ref{mweaklybranch}, and
$$
[m_1,m_2]=([m_1,m_2]m_2^{-1}m_1, m_1^{-1}m_2),
$$
we deduce that $(m_2^{-1}m_1, m_1^{-1}m_2)$ is an element of $\M'$. Furthermore, the elements $[m_1,m_2]^y$ where $y \in \{m_1, m_2, m_1^{-1}, m_2^{-1}\}$ are in $\langle [m_1,m_2] \rangle$ modulo $\M' \times \M'$ (note that $\M' \times \M'$ is normal in $\M$). More precisely, we have
\begin{align*}
 [m_1,m_2]^{m_1}&=({m_1}^{-2}m_2m_1,{m_1}^{-1}m_2^{-1}{m_1}^2)\\
 &=([{m_1}^2,{m_2}^{-1}]{m_2}{m_1}^{-1}, [m_1,m_2]{m_2}^{-1}m_1)\\ &\equiv ({m_1}^{-1}m_2, {m_2}^{-1}m_1) \mod \M' \times \M',  
\end{align*}
and similarly for the other commutators. Thus $\M'= (\M' \times \M')\langle [m_1,m_2] \rangle$, as required.

Now we want to show that if $m_1^im_2^j \in \M'$, then necessarily $i=j=0$. As $m_1^im_2^j \in \M' \leq \St_{\M}(1)$, then $i$ must be even. By way of contradiction, we choose the element $m_1^im_2^j \in \M'$ subject to the condition that $i$ is divisible by the least possible positive power of 2, say $2^a$, for some $a$. In other words, $\nu_2(i)=a$. Then if $m_1^rm_2^s \in \M'$, necessarily $2^a \mid r$. Note that it cannot happen that $r=0$ and $s\neq 0$ as $m_2$ is of infinite order. To prove the latter, we show that $m_1$ is of infinite order and as a consequence so is $m_2$, since $m_2=(m_1, m_2)$. By way of contradiction suppose that, for some $k$, $m_1$ has order $n=2k$, as $m_1$ has order $2$ modulo the first level stabilizer. We have
\begin{equation*}
m_1^n = (m_1^{k}, m_1^{k}) =(1, 1),
\end{equation*}
which yields a contradiction as $k<n$.
Now, writing $i=2i_1$ for some $i_1$, we have
$$
m_1^im_2^j=({m_1}^{i_1+j},{m_1}^{i_1}{m_2}^j) \equiv [m_1^k, m_2^k] \equiv ({m_1}^{k}{m_2}^{-k}, {m_1}^{-k}{m_2}^k) \mod \M'\times \M'.
$$
This implies that ${m_1}^{i_1+j-k}{m_2}^{k} \in \M'$ and ${m_1}^{i_1+k}{m_2}^{j-k} \in \M'$. As $2^a \mid i_1+j-k$ and $2^a \mid i_1+k$, then $2^a$ divides also $j$. This is because $2^a \mid 2i_1+j=i+j$ and by hypothesis $2^a \mid i$.  Finally, we also have ${m_1}^{i_1+k}{m_2}^{j-k} \in \M'$, from which we get
$$
{m_1}^{i_1+k}{m_2}^{j-k}=\left(m_1^{\frac{i_1+k}{2}+j-k}, m_1^{\frac{i_1+k}{2}}m_2^{j-k}\right).
$$
By Remark \ref{prodcomm}, we have $m_1^{i_1+j}m_2^{j-k}\in \M'$ which implies that $2^a \mid i_1+j$. As $\nu_2(i_1)=a-1$ and $2^a \mid j$, then $\nu_2(i_1+j)=a-1$, a contradiction as $2^a \mid i_1+j$. This completes the proof.
\end{proof}

As a consequence, we prove the following.

\begin{pr}
The group $\M(2)$ is torsion-free.
\end{pr}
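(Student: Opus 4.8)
The plan is to show that every nontrivial element of $\M = \M(2)$ has infinite order, arguing by contradiction via a minimal-counterexample (descent) argument on the level at which torsion first appears. Suppose some $g \in \M \setminus \{1\}$ has finite order; among all such torsion elements choose one, call it $g$, and recall the structure we already have: $\M/\M' \cong \mathbb{Z} \times \mathbb{Z}$ by Lemma \ref{zz}, so in particular $\M/\M'$ is torsion-free, which forces $g \in \M'$, and hence $g \in \St_\M(1)$. Write $\psi(g) = (g_1, g_2)$ with $g_1, g_2 \in \M$ by self-similarity. Since $g$ has finite order, so do both sections $g_1$ and $g_2$, as they are powers (evaluated componentwise) of $g$ restricted to the fixed subtrees; more precisely $\psi(g^n) = (g_1^n, g_2^n)$, so if $g^n = 1$ then $g_1^n = g_2^n = 1$.

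The first key step is to rule out the possibility that one section is trivial and the other is not. If, say, $g = (1, g_2)$ with $g_2 \neq 1$, then $g_2$ is again a torsion element of $\M$, and I want to feed this into a descent. The cleanest route is to combine Remark \ref{prodcomm} with the refined description $\M' = (\M' \times \M')\langle [m_1,m_2]\rangle$ from Lemma \ref{zz}: writing $g \in \M'$ in the form $g = (a_1, a_2)\,[m_1,m_2]^\epsilon$ with $a_i \in \M'$ and $\epsilon \in \mathbb{Z}$, and using $[m_1,m_2] = ([m_1,m_2]m_2^{-1}m_1,\, m_1^{-1}m_2)$, one reads off $g_1$ and $g_2$ explicitly in terms of $a_1, a_2, \epsilon$. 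The condition $g \in \St_\M(1)$ already implies (looking modulo $\M'$, i.e. in $\mathbb{Z}\times\mathbb{Z}$) that $\epsilon$'s contribution is constrained; in fact $[m_1,m_2] \notin \St_\M(1)$ forces $\epsilon = 0$ is too strong — rather, the point is that $g_1 g_2 \in \M'$ by Remark \ref{prodcomm}, so the "abelianized lengths" of the two sections are linked, and a torsion element must have $g_1, g_2$ themselves torsion, living in $\M'$ again, so the descent stays inside $\M'$ and strictly decreases a suitable complexity measure (e.g. the sum of lengths of $g_1, g_2$ as words, or the level of the tree below which $g$ acts nontrivially).

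The second key step is the descent itself. Define for a torsion element the depth $n(g)$ to be the largest level $\LL_n$ on which $g$ acts nontrivially if such a largest level exists; if $g$ acts nontrivially on arbitrarily deep levels we derive a contradiction with finite order directly, since an order-$m$ element can only act through a bounded portion of the tree in a self-similar torsion situation — here one uses that $m_1$ (hence $m_2 = (m_1, m_2)$) has infinite order, already shown inside the proof of Lemma \ref{zz}, together with the fact that any finite-order element of $\St_\M(1)$ has finite-order sections of strictly smaller depth. Then $g = (g_1, g_2)$ with $g_1, g_2$ torsion of strictly smaller depth, so by induction $g_1 = g_2 = 1$, whence $g = 1$, contradicting $g \neq 1$. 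The base case is depth $0$: an element acting trivially on $\LL_1$ with both sections trivial is the identity, and an element acting as the nontrivial permutation on $\LL_1$ with trivial sections is a conjugate of the rooted involution, which however is not in $\M$ (it is not even in $\St_\M(1)$, yet $\M \cap \{\text{rooted}\} = \langle \bar{m_1}\rangle$ which has no order-$2$ element since $m_1$ has infinite order).

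The main obstacle I expect is making the descent bookkeeping honest: one must verify that "torsion element of $\M$ in $\St_\M(1)$" genuinely implies "both sections are torsion elements of $\M$ of strictly smaller depth", and that the depth is well-defined and finite for a finite-order element — this is exactly where the infinite order of $m_1$ is essential (it is why $\M$ is not, say, the Grigorchuk group where such a descent would fail to terminate for torsion elements, since there torsion is genuine). Concretely the delicate point is excluding the mixed case $g = (1, g_2)$, $g_2 \neq 1$ of small depth: one needs that such a $g$ cannot lie in $\M'$ unless $g_2 \in \M'$, and then that $g_2$, being of strictly smaller depth and torsion, is trivial by induction — so the real content is the interplay between the two coordinate projections forced by Remark \ref{prodcomm} and the explicit coset description in Lemma \ref{zz}. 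Once that linkage is pinned down, the rest is a clean induction on depth.
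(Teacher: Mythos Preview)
Your overall strategy --- torsion forces membership in $\M'\leq\St_\M(1)$, then descend via sections --- is exactly the paper's. But your descent parameter is chosen the wrong way round, and that is a genuine gap. You define the depth $n(g)$ as the \emph{largest} level on which $g$ acts nontrivially, and then claim that finite order forces this to be finite ``directly''. That claim is false in general: in Grigorchuk's group every nontrivial element acts nontrivially on infinitely many levels, yet the group is torsion. So you cannot appeal to finite order alone to make your depth well-defined; the argument is circular, since ``finite depth'' for all torsion elements is essentially equivalent to what you are trying to prove.

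The fix is to flip the invariant. The paper takes, among all nontrivial torsion elements, one $g\in\St_\M(n)\setminus\St_\M(n+1)$ with $n$ \emph{minimal}; this $n$ is automatically well-defined (it is the first level on which $g$ acts nontrivially) and satisfies $n\geq 1$ because torsion lies in $\M'\leq\St_\M(1)$. Writing $g=(g_1,g_2)$, both sections are torsion, hence by minimality each $g_i\in\St_\M(n)$, whence $g\in\St_\M(n+1)$, a contradiction. No further casework is needed: your discussion of the mixed case $(1,g_2)$, the appeal to Remark~\ref{prodcomm}, and the coset description from Lemma~\ref{zz} are all unnecessary once the descent is set up this way.
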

\begin{proof}
Suppose by way of contradiction that there exists an element of finite order in $\M$. Since $\M/\M' \cong \mathbb{Z} \times \mathbb{Z}$ by Lemma \ref{zz}, then this element must lie in $\M' \leq \St_{\M}(1)$. Suppose that among all elements of finite order, we take the element $g$ that lies in $\St_{\M}(n) \setminus \St_{\M}(n+1)$, with $n$ minimum with this property. Write $g=(g_1, g_2)$. As $g$ is of finite order, then also $g_1, g_2$ must be of finite order. By our assumption, $g_1, g_2$ must lie at least in $\St_{\M}(n)$. This implies that $g \in \St_{\M}(n+1)$, a contradiction.
\end{proof}

In the following we determine the order of some elements of $\M(d)$, for $d>2$.

\begin{pr}
Let $d >2$. Then the group $\M(d)$ is neither torsion-free nor torsion.
\end{pr}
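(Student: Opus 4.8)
The plan is to exhibit explicitly one element of finite order and one element of infinite order in $\M(d)$, for every $d > 2$. For the infinite-order part, the same argument already used inside the proof of Lemma \ref{zz} works verbatim: the generator $m_1 = (1, \dots, 1, m_1)(1\ \dots\ d)$ has image of order $d$ in $S_d$ (coming from the rooted part $(1\ \dots\ d)$), and if $m_1$ had finite order $n$, then $d \mid n$, say $n = dk$, and computing $m_1^n$ one finds $m_1^n = (m_1^k, \dots, m_1^k)$, forcing $m_1^k = 1$ with $k < n$, a contradiction. Hence $m_1$ has infinite order and $\M(d)$ is not torsion.

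For the torsion part, the natural candidate is an element built from the ``bottom'' generators whose rooted parts are small transpositions or short cycles, so that suitable powers land in a level stabilizer with all sections trivial. First I would look for an element $g$ whose first-level decomposition is $g = (g_1, \dots, g_d)\sigma$ with $\sigma$ of small order and with the sections $g_i$ being (conjugates of) generators $m_j$ with $j$ large, so that iterating $\psi$ quickly produces deeper and deeper sections that eventually become trivial. The cleanest choice is to work with $m_{d-1} = (1, m_{d-1}, 1, \dots, 1)(1\ 2)$, or more generally with the element $m_2 m_1^{-1}$-type combinations, and to check that some explicit word has a power equal to the identity by descending through the tree. Concretely, one writes down the candidate, applies $\psi$ repeatedly, tracks the (finitely many) sections that appear, and verifies that after finitely many levels every section is $1$; since an automorphism of $\T_d$ all of whose sections at some level are trivial is itself trivial, this shows the power is trivial.

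The main obstacle I expect is identifying the \emph{right} torsion element and then carrying out the section bookkeeping without it blowing up: because $\M(d)$ is weakly regular branch over $\M'$ (Proposition \ref{mweaklybranch}), and the sections of generators are again generators, a careless choice produces sections that keep reproducing high-index generators like $m_1$ (which is of infinite order) and never terminates. So the key idea is to choose the element so that the ``dangerous'' generator $m_1$ never appears among its iterated sections — for instance restricting attention to the subgroup generated by $m_2, \dots, m_d$, or to an element whose sections only ever involve $m_2, \dots, m_{d-1}$, whose rooted parts are cycles of length $< d$, so that a bounded power kills the rooted part at every level simultaneously. Once such an element is pinned down, verifying that a specific power is the identity is a finite, if slightly tedious, computation via repeated application of $\psi$ and \eqref{carmine}.

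Combining the two parts: $\M(d)$ contains $m_1$ of infinite order, so it is not torsion, and it contains the explicit finite-order element exhibited above, so it is not torsion-free; this proves the proposition.
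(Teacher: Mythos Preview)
Your infinite-order half is fine and matches the paper's argument exactly: the contradiction $m_1^{dk}=(m_1^k,\dots,m_1^k)=1$ with $k<dk$ is precisely what the paper does.

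The torsion half, however, is not a proof but a search strategy, and the strategy as you describe it will not succeed. Your idea is to pick an element whose iterated sections avoid $m_1$ and only involve $m_2,\dots,m_{d-1}$, on the grounds that their rooted parts are cycles of length $<d$ and hence ``a bounded power kills the rooted part at every level simultaneously.'' But each $m_j$ with $2\le j\le d-1$ is itself of infinite order by the very argument you used for $m_1$ (replace $d$ by $d+1-j$). So getting rid of the rooted part at one level just pushes copies of $m_j$ down to the next level, and the process never terminates; avoiding $m_1$ buys you nothing. What is actually needed is not that the sections be ``small'' generators, but that along each orbit of the rooted permutation the sections \emph{cancel} when multiplied together.

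The paper exploits exactly this cancellation, and does so for free: the commutator $[m_1,m_2]$ was already computed in the proof of Proposition~\ref{mweaklybranch} as
\[
[m_1,m_2]=(1,m_2^{-1},1,\dots,1,m_2)(1\ 2\ d),
\]
and since the rooted part $(1\ 2\ d)$ is a $3$-cycle permuting precisely the three nontrivial coordinates $1,2,d$ with entries $1$, $m_2^{-1}$, $m_2$, cubing gives the product $1\cdot m_2^{-1}\cdot m_2=1$ (in some order) in each of those coordinates. Hence $[m_1,m_2]^3=1$, and $[m_1,m_2]\ne 1$ because it does not lie in $\St_{\M}(1)$. This is a one-line verification once Equation~\eqref{commm1m2} is in hand, and it is the step your plan is missing.
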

\begin{proof}
For ease of notation we write $\M$ for $\M(d)$. We start by proving that the given generators of $\M$ are of infinite order. Consider $m_1$, and suppose by way of contradiction that its order is $n$. Then if $m_1^n=1$, we obtain that $m_1^n$ must lie in  $\St_{\M}(1)$. Also, its order must be a multiple of $d$, say $n=dk$ for some $k$, since $m_1$ has order $d$ modulo the first level stabilizer. Since $m_1=(1,\dots, 1, m_1)(1 \ 2 \ \dots \ d)$, we obtain
\begin{align*}
m_1^n = (m_1^{k}, \dots, m_1^{k}) =(1, \dots, 1).
%= \big((1,\dots, 1, m_1)(1 \ 2 \ \dots \ d)\big)^{dk}\\
% & = (1 \ \dots \ d )^{dk}(1,\dots, 1, m_1)^{(1 \ \dots \ d)^{dk}} \dots (1,\dots, 1, m_1)^{(1 \ \dots \ d)} \\
\end{align*}
This yields a contradiction since $m_1^k=1$ and $k<n$. Similar arguments can be used for the generators $m_2, \dots, m_{d-1}$, and $m_d$ has infinite order because $m_d=(m_1, \dots, m_d)$. Finally, $\M$ is not torsion-free since it contains elements of finite order; for example $[m_1,m_2]$ has order 3. By Equation \eqref{commm1m2}, we have 
$$
[m_1,m_2]=(1, m_2^{-1}, 1, \dots, 1, m_2)(1 \ 2 \ d).
$$
Thus it follows readily that $[m_1, m_2]^3=1$, as desired.
\end{proof}

We conclude the paper by proving the remaining part of the main theorem. 

\begin{pr}
The group $\M$ is non-contracting.
\end{pr}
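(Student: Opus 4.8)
The plan is to prove that $\M$ has no finite nucleus by exhibiting infinitely many pairwise distinct elements of $\M$ each of which reappears, verbatim, as one of its own sections at vertices of arbitrarily large level. Such elements ``never get shorter'' along the ray obtained by repeatedly descending into the last child, so no single finite subset of $\M$ can contain all the deep sections of all elements of $\M$, which is exactly what the definition of contracting demands.

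Concretely I would argue with the generator $m_d$. Since $m_d = (m_1,\dots,m_d) \in \St_{\M}(1)$ and $\psi$ is an isomorphism onto the $d$-fold direct power, one has $\psi(m_d^{\,k}) = (m_1^{\,k},\dots,m_d^{\,k})$ for every $k$, so the section of $m_d^{\,k}$ at the last vertex $u$ of the first level is $(m_d^{\,k})_u = m_d^{\,k}$. Since $m_d^{\,k}$ fixes $u$, we get $(m_d^{\,k})_{uu} = \bigl((m_d^{\,k})_u\bigr)_u = (m_d^{\,k})_u = m_d^{\,k}$, and inductively $(m_d^{\,k})_{u^n} = m_d^{\,k}$ for every $n \geq 1$, where $u^n$ denotes the vertex of level $n$ obtained by concatenating $u$ with itself $n$ times. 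Now suppose, for a contradiction, that $\M$ is contracting, witnessed by a finite set $\mathcal F \subseteq \M$. Applying the definition to $g = m_d^{\,k}$ gives some $n$ with $(m_d^{\,k})_v \in \mathcal F$ for all $v \in \LL_{\geq n}$; taking $v = u^n$ yields $m_d^{\,k} \in \mathcal F$. Hence $\{\, m_d^{\,k} : k \geq 1 \,\} \subseteq \mathcal F$. But these elements are pairwise distinct, because $m_d$ has infinite order: for $d = 2$ this is the torsion-freeness of $\M(2)$, and for $d \geq 3$ it follows from the preceding proposition, where every generator of $\M(d)$ is shown to have infinite order. This contradicts the finiteness of $\mathcal F$, so $\M$ is non-contracting.

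I do not expect a genuine obstacle in carrying this out. The section computation is immediate from $m_d \in \St_{\M}(1)$, and the only ingredient drawn from elsewhere is that $m_d$ has infinite order --- which is precisely why the results on the orders of elements of $\M(d)$ are proved just before this proposition. The only point deserving a moment of care is to frame the final step exactly against the definition of contracting: for an arbitrary finite candidate set one must exhibit a single element whose sections escape that set at unbounded depth, which is what the family $\{m_d^{\,k}\}$ provides.
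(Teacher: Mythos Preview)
Your proof is correct and follows essentially the same strategy as the paper's: exhibit an infinite-order element that reappears as its own section along an infinite ray, so that all its powers would have to lie in any nucleus. The only cosmetic difference is the witness chosen --- the paper uses $m_d^{m_1}$, whose section at the vertex $1$ is again $m_d^{m_1}$, while you use $m_d$ itself along the ray through the last vertex $d$; your choice is in fact slightly more direct since no conjugation needs to be computed.
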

\begin{proof}
Suppose by way of contradiction that $\M$ is contracting with nucleus $\mathcal{N}$. Notice that the element $m_d^{m_1}$ stabilizes the vertex 1. As a consequence, by induction, $m_d^{m_1}$ fixes all the vertices of the path $v=1 \overset{n}{\dots} 1$ for all $n \geq 1$. Also, $(m_d^{m_1})_{v}=m_d^{m_1}$. Clearly, this implies that $m_d^{m_1}$ lies in $\mathcal{N}$. Consider now a power $k$ of $m_d^{m_1}$. Arguing as before, we obtain again that $(m_d^{m_1})^k$ fixes $v$ and its section at $v$ is $(m_d^{m_1})^k$. Thus, $(m_d^{m_1})^k \in \mathcal{N}$ for any $k \geq 1$. This concludes the proof since $m_d^{m_1}$ has infinite order.
\end{proof}

\vspace{0.6cm}
\bibliography{bib}

\end{document}